\newtheorem{theorem}{Theorem}[section]
\newtheorem{corollary}[theorem]{Corollary}
\DeclareMathOperator{\e}{\mathrm{e}}
\begin{document}
\title{A Digit Reversal Property for an Analogue of Stern's Sequence}
\author{Lukas Spiegelhofer
\footnote{The author acknowledges support by the Austrian Science Fund (FWF), project F5502-N26, which is a part of the Special Research Program ``Quasi Monte Carlo Methods: Theory and Applications''.}}
\date{\small Institute of Discrete Mathematics and Geometry\\ Vienna University of Technology\\ Wiedner Hauptstra\ss e 8--10, Vienna, Austria\\
\texttt{lukas.spiegelhofer@tuwien.ac.at }}
\maketitle
\renewcommand{\thefootnote}{\fnsymbol{footnote}}
\footnotetext{\emph{2010 Mathematics Subject Classification.} Primary: 11A63, Secondary: 11B75}
\footnotetext{\emph{Key words and phrases.} Stern's diatomic sequence, digit reversal}
\renewcommand{\thefootnote}{\arabic{footnote}}


\maketitle
\begin{abstract}
We consider a variant of Stern's diatomic sequence, studied recently by Northshield.
We prove that this sequence $b$ is invariant under \emph{digit reversal} in base $3$,
that is, $b_n=b_{n^R}$, where $n^R$ is obtained by reversing the base-$3$ expansion of $n$.
\end{abstract}
\section{Introduction}
\emph{Stern's diatomic sequence} $s$ is defined by $s_0=0$, $s_1=1$
and
\[s_{2n}=s_n,\qquad s_{2n+1}=s_n+s_{n+1}\]
for all $n\geq 1$.
Northshield~\cite{N2015} introduced the following analogue having values in $\mathbb Z[\sqrt{2}]$:
$b_0=0$, $b_1=1$ and
\begin{align*}
b_{3n}&=b_n,\\
b_{3n+1}&=\tau\cdot b_n+b_{n+1},\\
b_{3n+2}&=b_n+\tau \cdot b_{n+1}
\end{align*}
for $n\geq 0$, where $\tau=\sqrt{2}$.
The first values of the sequence $b$ are therefore as follows.
\[\begin{array}{ccccccccccccccc}
n&0&1&2&3&4&5&6&7&8&9&10&11&12&13\\
b_n&0&1&\sqrt{2}&1&2\sqrt{2}&3&\sqrt{2}&3&2\sqrt{2}&1&3\sqrt{2}&5&2\sqrt{2}&7\\[2mm]
n&14&15&16&17&18&19&20&21&22&23&24&25&26&27\\
b_n&5\sqrt{2}&3&4\sqrt{2}&5&\sqrt{2}&5&4\sqrt{2}&3&5\sqrt{2}&7&2\sqrt{2}&5&3\sqrt{2}&1
\end{array}
\]
(for a longer list consult Northshield's paper~\cite{N2015}, for example).
Northshield proved that
\[
\limsup_{n\rightarrow\infty}\frac{2b_n}{(2n)^{\log_3(\sqrt{2}+1)}}\geq 1
\]
(where $\log_3$ denotes the base-$3$ logarithm) and conjectured that equality holds; this conjecture was recently proved by Coons~\cite{C2017}, using the method used by Coons and Tyler~\cite{CT2014}, see also Coons and the author~\cite{CS2017} .

Considering the first values of Northshield's sequence, we note the following apparent symmetry: $b_{3^k+m}=b_{3^{k+1}-m}$ for $m\leq 3^k$, proved by Northshield~\cite{N2015}.
Moreover, a less apparent property is present, which is the subject of this paper:
for example, we have $b_{11}=b_{19}=5$ and $b_{29}=b_{55}=7$. The indices on both sides of these identities are related via \emph{digit reversal} in base $3$.
More precisely, for the proper base-$q$ expansion of an integer $n\geq 1$,
$n=\varepsilon_\nu q^\nu+\cdots+\varepsilon_0$, we define
$n^R=\varepsilon_0q^\nu+\cdots+\varepsilon_{\nu-1}q^1+\varepsilon_\nu$.
(We will always indicate the base in which the digit reversal is performed.)
We are going to prove the following theorem.
\begin{theorem}\label{thmNreversal}
We have
\[b_n=b_{n^R},\]
where the digit reversal is performed in base $3$.
\end{theorem}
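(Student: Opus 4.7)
The plan is to encode the base-$3$ recursion for $b$ as a product of $2\times 2$ matrices, in analogy with the standard matrix-product representation of Stern's sequence. Introduce
\[
M_0=\begin{pmatrix}1&0\\ \tau&1\end{pmatrix},\qquad
M_1=\begin{pmatrix}\tau&1\\ 1&\tau\end{pmatrix},\qquad
M_2=\begin{pmatrix}1&\tau\\ 0&1\end{pmatrix}.
\]
Checking the three defining cases of the recurrence shows that $(b_n,b_{n+1})^T=M_{n\bmod 3}(b_{\lfloor n/3\rfloor},b_{\lfloor n/3\rfloor+1})^T$, so iterating along the base-$3$ digits of $n=\varepsilon_\nu\cdots\varepsilon_0$, starting from $(b_0,b_1)^T=(0,1)^T$, gives
\[
\begin{pmatrix}b_n\\ b_{n+1}\end{pmatrix}=M_{\varepsilon_0}M_{\varepsilon_1}\cdots M_{\varepsilon_\nu}\begin{pmatrix}0\\ 1\end{pmatrix}.
\]
Since $M_0(0,1)^T=(0,1)^T$, this formula is insensitive to padding with leading zeros, so the corresponding product with the digits in reverse order computes $(b_{n^R},b_{n^R+1})^T$ regardless of whether $n^R$ happens to have a shorter proper expansion.

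The key algebraic observation is the identity $M_d^T=JM_dJ$ for every $d\in\{0,1,2\}$, where $J=\bigl(\begin{smallmatrix}0&1\\ 1&0\end{smallmatrix}\bigr)$ is the swap matrix. Equivalently, transposition of $M_d$ corresponds to the digit complement $d\mapsto 2-d$, which fixes $d=1$ and swaps $d=0$ with $d=2$. Writing $A=M_{\varepsilon_0}\cdots M_{\varepsilon_\nu}$ and $B=M_{\varepsilon_\nu}\cdots M_{\varepsilon_0}$, and inserting $J^2=I$ between consecutive factors of $A^T$, propagates this identity through the full product to yield $B=JA^TJ$.

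The conclusion follows because $J$ also interchanges the two boundary vectors: $Jv=e_1$ and $e_1^TJ=v^T$, where $v=(0,1)^T$ and $e_1=(1,0)^T$. Combining this with $B=JA^TJ$ gives
\[
b_{n^R}=e_1^T B v=e_1^T JA^TJv=v^T A^T e_1=(e_1^T Av)^T=e_1^T Av=b_n,
\]
where the last equality uses that the quantity is a scalar. The only nontrivial step is spotting the combined symmetry $M_d^T=JM_dJ$ together with $Jv=e_1$; everything else is direct verification. The main obstacle, such as it is, lies entirely in guessing the correct matrix encoding of the recurrences, after which the theorem drops out of the two compatibilities above.
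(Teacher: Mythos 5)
Your proof is correct, and it reaches Theorem~\ref{thmNreversal} by a genuinely shorter route than the paper. The paper deduces the theorem from a general statement (Theorem~\ref{thmmain}) about sequences given by products of $2\times2$ matrices: one finds a conjugating matrix $S=\bigl(\begin{smallmatrix}a&b\\c&d\end{smallmatrix}\bigr)$ subject to the algebraic compatibility condition~\eqref{eqnessential}, and then proves a forward/backward product identity by induction on the number of digits; Northshield's sequence is handled in Corollary~\ref{corParam} with $S=\bigl(\begin{smallmatrix}1&1\\-1/2&1/2\end{smallmatrix}\bigr)$. You use the same matrix encoding (your $M_0,M_1,M_2$ are exactly the paper's $A(0),A(1),A(2)$ for $\tau=\sigma=\sqrt2$, $\omega=1$), but you observe that these three matrices are persymmetric --- each has equal diagonal entries, so $M_d^T=JM_dJ$ for the swap matrix $J$ --- and that $J$ interchanges the boundary vectors $(0,1)^T$ and $(1,0)^T$. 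That collapses the entire argument to the telescoping identity $B=JA^TJ$ followed by a one-line transpose computation, with no induction and no auxiliary condition to verify; your handling of leading zeros via $M_0(0,1)^T=(0,1)^T$ is also correct and necessary. What the paper's heavier machinery buys is generality: for the full family of Corollary~\ref{corParam} with $\sigma\neq\tau$ the matrix $A(1)=\bigl(\begin{smallmatrix}\tau&1\\\omega&\sigma\end{smallmatrix}\bigr)$ has unequal diagonal entries, so conjugation by $J$ alone no longer converts the product into its transpose, and one genuinely needs a more general $S$ together with condition~\eqref{eqnessential}; the same applies to Corollaries~\ref{corMS} and~\ref{corstern}. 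Your argument is the cleanest possible proof of the specific statement asked, at the cost of not extending beyond the persymmetric case.
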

In fact, we prove the more general Theorem~\ref{thmmain} below.
For the Stern sequence $s$, such a digit reversal property (in base $2$) was pointed out by Dijkstra~\cite{D1980},\cite[pp.230--232]{D1982}.
It can be seen as the statement that the continued fractions $[k_0;k_1,\ldots,k_r]$ and $[k_r;k_{r-1},\ldots,k_0]$ have the same numerator. The close connection between Stern's sequence and continued fractions is well known (see Graham, Knuth, Patashnik~\cite[Exercise~6.50]{GKP1989}, Lehmer~\cite{L1929}, Lind~\cite{L1969}, Stern~\cite{S1858}):
if $n=(1^{k_0}0^{k_1}\cdots 1^{k_{r-2}}0^{k_{r-1}}1^{k_r})_2$,
then $s_n$ is the numerator of the continued fraction
$[k_0;k_1,\ldots,k_r]$.

Morgenbesser and the author~\cite{MS2012} proved an analogous digit reversal property for the correlation
\[\gamma_t(\vartheta)=\lim_{N\rightarrow\infty}\frac 1N
\sum_{0\leq n<N}\e\bigl(\vartheta \sigma_q(n+t)-\vartheta \sigma_q(n)\bigr),
\]
where $\e(x)=\exp(2\pi i x)$ and $\sigma_q(n)$ is the sum of digits of $n$ in base $q$ ($q\geq 2$ an integer).
That is, we proved that $\gamma_t(\vartheta)=\gamma_{t^R}(\vartheta)$,
where the digit reversal is in base $q$.
We note that the quantity $\gamma_t(\vartheta)$ satisfies (see B\'esineau~\cite{B1972})
$\gamma_0(\vartheta)=1$ and
for $0\leq k<q$ and $t\geq 0$,
\begin{equation}\label{eqnBesineauRec}
\gamma_{qt+k}(\vartheta)=\frac{q-k}{q}\e(\vartheta k)\gamma_t(\vartheta)+\frac kq \e\bigl(-\vartheta(q-k)\bigr)\gamma_{t+1}(\vartheta),\end{equation}
which we will use later.
With the help of these correlations, we proved that
$c_t=c_{t^R}$, where
\[c_t=\lim_{N\rightarrow\infty}\frac 1N\left\lvert\left\{n<N:\sigma_2(n+t)\geq \sigma_2(n)\right\}\right\rvert\]
and the digit reversal is in base $2$.
These quantities arise in a seemingly simple conjecture due to Cusick (private communication, 2015) stating that $c_t>1/2$; see the paper~\cite{DKS2016} by Drmota, Kauers, and the author for partial results on this conjecture.

Moreover, the author~\cite{S2017} recently proved a digit reversal property for \emph{Stern polynomials}.
In that paper, we define polynomials $s_n(x,y)$ by $s_1(x,y)=1$ and
\begin{equation}\label{eqnSternpolyRec}
\begin{aligned}
s_{2n}(x,y)&=s_n(x,y)\\
s_{2n+1}(x,y)&=xs_n(x,y)+ys_{n+1}(x,y)
\end{aligned}
\end{equation}
for $n\geq 1$ and prove that $s_{n^R}(x,y)=s_n(x,y)$, where the digit reversal is in base $2$.
We note that this is a generalization of the case $q=2$
 from the above-cited paper by Morgenbesser and the author, and also implies the digit reversal symmetry for Stern's diatomic sequence.

The recurrence relations for the sequences discussed above imply that we are dealing with $q$-\emph{regular sequences} in the sense of Allouche and Shallit~\cite{AS1992}, where $q\in\{2,3\}$ and the underlying ring is $\mathbb Z,\mathbb Z[\sqrt{2}],\mathbb C$, and $\mathbb Z[x,y]$ respectively.
\section{Results}
We will derive the digit reversal property stated in the introduction as a corollary of the following theorem.
\begin{theorem}\label{thmmain}
Let $q\geq 2$ be an integer.
Assume that $(x_n)_{n\geq 0}$ is a sequence of complex numbers having the following properties:
\begin{enumerate}
\item
There exist complex $2\times 2$-matrices $A(0),\ldots,A(q-1)$ and complex numbers $\alpha,\beta$ such that for all $n\geq 1$
we have the representation
\[x_n=(1\,,0)A(\varepsilon_0)\cdots A(\varepsilon_{\nu-1})\left(\begin{matrix}\alpha\\\beta\end{matrix}\right),\]
where $(\varepsilon_{\nu-1}\cdots \varepsilon_0)_q$ is the proper $q$-ary expansion of $n$.
\item
Write 
\[A(\varepsilon)=\left(\begin{matrix}a_1(\varepsilon)&a_2(\varepsilon)\\a_3(\varepsilon)&a_4(\varepsilon)\end{matrix}\right).\]
There exist complex numbers $a\neq 0,b\neq 0,c,d$ such that $ad-bc=1$ and
\begin{equation}\label{eqnessential}
ab\bigl(a_1(\varepsilon)\beta-a_3(\varepsilon)\alpha-a_4(\varepsilon)\beta\bigr)+a_2(\varepsilon)(\beta+2bc\beta-cd\alpha)=0
\end{equation}
for all $\varepsilon\in\{0,\ldots,q-1\}$.
\end{enumerate}
Then $x_n=x_{n^R}$, where the digit reversal is in base $q$.
\end{theorem}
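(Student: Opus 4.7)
The strategy is to interpret the essential equation~\eqref{eqnessential} as a self-adjointness condition for the matrices $A(\varepsilon)$ with respect to a suitable symmetric bilinear form on $\mathbb{C}^2$. I would introduce the symmetric matrix
\[
N=\begin{pmatrix}(1+2bc)\beta-cd\alpha & -ab\beta\\ -ab\beta & ab\alpha\end{pmatrix}
\]
and verify by direct comparison of entries that $NA(\varepsilon)=A(\varepsilon)^T N$ holds for every $\varepsilon\in\{0,\ldots,q-1\}$. For symmetric $N$, the diagonal entries of $NA-A^TN$ vanish automatically, while its two off-diagonal entries are negatives of one another and both reduce to the single scalar equation $N_{11}a_2(\varepsilon)-N_{22}a_3(\varepsilon)+N_{12}(a_4(\varepsilon)-a_1(\varepsilon))=0$; substituting the entries of $N$ recovers exactly~\eqref{eqnessential}.

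Once this self-adjointness is in hand, pushing one factor past $N$ at a time yields the global matrix identity
\[
NP=Q^T N,\qquad P:=A(\varepsilon_0)\cdots A(\varepsilon_{\nu-1}),\quad Q:=A(\varepsilon_{\nu-1})\cdots A(\varepsilon_0),
\]
where $P$ corresponds to $n$ and $Q$ to $n^R$. Applying both sides to $w=(\alpha,\beta)^T$, a direct computation shows that the second entry of $Nw$ vanishes identically, since it equals $-ab\beta\alpha+ab\alpha\beta=0$; hence $Nw=\lambda e_1$ with $\lambda:=(1+2bc)\alpha\beta-cd\alpha^2-ab\beta^2$ and $e_1=(1,0)^T$.

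Writing $Pw=(p_1,p_2)^T$, the identity $NPw=\lambda Q^T e_1$ becomes the pair of scalar equations $N_{11}p_1+N_{12}p_2=\lambda Q_{11}$ and $N_{12}p_1+N_{22}p_2=\lambda Q_{12}$. Forming the linear combination $\alpha\cdot(\text{first})+\beta\cdot(\text{second})$, the coefficient of $p_1$ becomes $(Nw)_1=\lambda$ while that of $p_2$ becomes $(Nw)_2=0$, yielding $\lambda p_1=\lambda(\alpha Q_{11}+\beta Q_{12})=\lambda\,x_{n^R}$. Since $p_1=(1,0)Pw=x_n$, cancelling $\lambda$ concludes $x_n=x_{n^R}$.

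The principal obstacle I anticipate is the degenerate case $\lambda=0$, in which the final identity collapses to $0=0$ and a separate argument or a non-degeneracy hypothesis would be required. In the intended application to Northshield's sequence, taking $(\alpha,\beta)=(0,1)$ and $(a,b,c,d)=(1,1,-1/2,1/2)$ (so that $1+2bc=0$ and the essential equation is satisfied for all three matrices) gives $\lambda=-ab\beta^2=-1\neq 0$, so the main argument applies without modification.
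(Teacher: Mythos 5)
Your reformulation of \eqref{eqnessential} as the self-adjointness condition $NA(\varepsilon)=A(\varepsilon)^TN$ is correct: for symmetric $N$ the difference $NA(\varepsilon)-A(\varepsilon)^TN$ is antisymmetric, and its $(1,2)$ entry, namely $N_{11}a_2(\varepsilon)-N_{22}a_3(\varepsilon)+N_{12}\bigl(a_4(\varepsilon)-a_1(\varepsilon)\bigr)$, is exactly the left-hand side of \eqref{eqnessential} for your choice of $N$. The chain $NPw=Q^TNw=\lambda Q^Te_1$ followed by contraction with $w=(\alpha,\beta)^T$ is then valid. In substance this is the paper's argument in different clothing: the paper conjugates by $S=\left(\begin{smallmatrix}a&b\\c&d\end{smallmatrix}\right)$ and proves, by induction on $\nu$, two separate identities for the two components of $S^{-1}Pw$; your $N$ is precisely $(S^{-1})^TDS^{-1}$ with $D=\mathrm{diag}\bigl(a(-c\alpha+a\beta),\,b(d\alpha-b\beta)\bigr)$, your $\lambda$ factors as $(d\alpha-b\beta)(-c\alpha+a\beta)$, and $\det N=ab\lambda$. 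What your packaging buys is brevity and transparency (one matrix identity, no induction); what it costs is exactly the degeneracy you flag.

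That degeneracy is a genuine gap in a proof of Theorem~\ref{thmmain} \emph{as stated}: nothing in the hypotheses excludes $\lambda=0$, and the paper's proof does cover that case, because each of its two component identities carries only one of the factors $d\alpha-b\beta$ and $-c\alpha+a\beta$ and therefore remains a nontrivial true statement when the other factor vanishes. Fortunately the gap closes inside your own framework. If $w=0$, all quantities vanish. If $w\neq 0$ and $\lambda=0$, then $N\neq 0$ (since $ab\neq 0$, the entries $-ab\beta$ and $ab\alpha$ cannot both vanish), while $\det N=ab\lambda=0$ and $Nw=\lambda e_1=0$; hence $\ker N=\mathbb{C}w$. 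Now $NA(\varepsilon)w=A(\varepsilon)^TNw=0$ shows that every $A(\varepsilon)$ maps $w$ into $\ker N=\mathbb{C}w$, i.e.\ $w$ is a common eigenvector of all the $A(\varepsilon)$; consequently $Pw$ and $Qw$ are both equal to $w$ times the same product of scalar eigenvalues (scalars commute), and $x_n=(1\,,0)Pw=(1\,,0)Qw=x_{n^R}$. With this two-line addendum your proof is complete and arguably cleaner than the induction; without it you have proved the theorem only under the additional hypothesis $\lambda\neq 0$, which happens to hold in the Northshield application but is not part of the statement.
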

From this, we derive a digit reversal property for a family of ($3$-regular) sequences, among which we find Northshield's sequence.
\begin{corollary}\label{corParam}
Let $\tau,\sigma$ be complex numbers and set $\omega=1-\sigma^2+\tau\sigma$.
Assume that the sequence $(a_n)_{n\geq 0}$ satisfies
$a_0=0$, $a_1=1$,
and for $n\geq 0$,
$a_{3n}=a_n$ and
\begin{align*}
a_{3n+1}&=\tau\cdot a_n+a_{n+1},\\
a_{3n+2}&=\omega\cdot a_n+\sigma\cdot a_{n+1}.
\end{align*}
Then $a_n=a_{n^R},$
where the digit reversal is in base $3$.
\end{corollary}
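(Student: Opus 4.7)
The plan is to derive Corollary~\ref{corParam} as a direct application of Theorem~\ref{thmmain}, by producing the matrix-product representation of $a_n$ required in hypothesis~(1) and then verifying the algebraic condition~\eqref{eqnessential} of hypothesis~(2).

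First, I would set up the matrices. The three recurrences for $a_{3n}$, $a_{3n+1}$, $a_{3n+2}$ can be packaged as
\[\binom{a_n}{a_{n+1}}=A(\varepsilon_0)\binom{a_m}{a_{m+1}}\quad\text{when }n=3m+\varepsilon_0,\]
with
\[A(0)=\begin{pmatrix}1&0\\\tau&1\end{pmatrix},\qquad A(1)=\begin{pmatrix}\tau&1\\\omega&\sigma\end{pmatrix},\qquad A(2)=\begin{pmatrix}\omega&\sigma\\0&1\end{pmatrix}.\]
Iterating along the base-$3$ digits $(\varepsilon_{\nu-1}\cdots\varepsilon_0)_3$ of $n$ and using the initial data $a_0=0$, $a_1=1$ yields hypothesis~(1) of Theorem~\ref{thmmain} with $\alpha=0$ and $\beta=1$.

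I would then verify \eqref{eqnessential}. With $\alpha=0$ and $\beta=1$ it simplifies to
\[ab\bigl(a_1(\varepsilon)-a_4(\varepsilon)\bigr)+a_2(\varepsilon)(1+2bc)=0,\qquad \varepsilon\in\{0,1,2\}.\]
The case $\varepsilon=0$ is automatic since $A(0)$ has equal diagonal entries and vanishing $(1,2)$-entry. For $\varepsilon=1$ the condition reads $ab(\tau-\sigma)+1+2bc=0$. For $\varepsilon=2$ it becomes $ab(\omega-1)+\sigma(1+2bc)=0$; but the identity $\omega-1=\sigma(\tau-\sigma)$, which is precisely the content of the hypothesis $\omega=1-\sigma^2+\tau\sigma$, shows that this is exactly $\sigma$ times the $\varepsilon=1$ condition and hence automatically satisfied.

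All that remains is to exhibit $a,b,c,d$ with $a\neq 0$, $b\neq 0$, $ad-bc=1$ and $ab(\tau-\sigma)+1+2bc=0$; for instance $a=b=1$, $c=(\sigma-\tau-1)/2$, $d=(\sigma-\tau+1)/2$ works. Theorem~\ref{thmmain} then gives $a_n=a_{n^R}$, with digit reversal in base~$3$.

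The pivotal step is the collapse of the three digit-dependent conditions into a single linear constraint on $(a,b,c)$. This collapse is precisely what the specific choice $\omega=1-\sigma^2+\tau\sigma$ in the hypothesis of the corollary is designed to enforce; without it one would generically face two independent equations and no admissible quadruple $(a,b,c,d)$ with $a,b\neq 0$. Spotting this reduction is the main obstacle and also the reason the corollary is formulated in this exact one-parameter family.
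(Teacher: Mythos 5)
Your proposal is correct and follows essentially the same route as the paper: the same matrices $A(0),A(1),A(2)$, the same choice $\alpha=0$, $\beta=1$, and the same quadruple $a=b=1$, $c=(\sigma-\tau-1)/2$, $d=(\sigma-\tau+1)/2$ plugged into Theorem~\ref{thmmain}. Your extra observation that the $\varepsilon=2$ instance of~\eqref{eqnessential} is $\sigma$ times the $\varepsilon=1$ instance, precisely because $\omega-1=\sigma(\tau-\sigma)$, is a clean way of organizing the ``short calculation'' the paper leaves implicit, but it is the same proof.
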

In particular, $\tau=\sigma=\sqrt{2}$ yields Theorem~\ref{thmNreversal}.

\begin{proof}[Proof of Corollary~\ref{corParam}]
We note that we can express the sequence $a$ in the form given in the theorem:
set $A(0)=\left(\begin{smallmatrix}1&0\\\tau&1\end{smallmatrix}\right),
A(1)=\left(\begin{smallmatrix}\tau&1\\\omega&\sigma\end{smallmatrix}\right),
A(2)=\left(\begin{smallmatrix}\omega&\sigma\\0&1\end{smallmatrix}\right)$.
Then
\[
(a_{3n+\varepsilon}\,,a_{3n+\varepsilon+1})
=
A(\varepsilon)\left(\begin{matrix}a_n\\a_{n+1}\end{matrix}\right)
\]
for all $n\geq 0$ and $\varepsilon\in\{0,1,2\}$.
Choosing $\alpha=a_0=0$ and $\beta=a_1=1$, we see that the first condition in Theorem~\ref{thmmain} is satisfied.
To verify equation~\eqref{eqnessential}, we set
\[\left(\begin{matrix}a&b\\c&d\end{matrix}\right)=\left(\begin{matrix}1&1\\\frac{\sigma-\tau-1}2&\frac{\sigma-\tau+1}2\end{matrix}\right)\]
and evaluate the left hand side for each of the three matrices $A(\varepsilon)$. This yields $0$ in each case after a short calculation.
\end{proof}

Moreover, we want to re-prove the known digit reversal properties presented in the introduction. Whereas the proof of Corollary~\ref{corMS} is essentially the same as the one given by Morgenbesser and the author~\cite{MS2012},
the proof of Corollary~\ref{corstern} is different from the one given by the author~\cite{S2017}.
\begin{corollary}\label{corMS}
Let $\vartheta\in\mathbb R$ and assume that
\[\gamma_t(\vartheta)=\lim_{N\rightarrow\infty}\frac 1N
\sum_{0\leq n<N}\e\bigl(\vartheta \sigma_q(n+t)-\vartheta \sigma_q(n)\bigr)
\]
for all integers $t\geq 0$.
Then $\gamma_t(\vartheta)=\gamma_{t^R}(\alpha)$.
\end{corollary}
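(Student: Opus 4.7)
The plan is to apply Theorem~\ref{thmmain} with $x_t=\gamma_t(\vartheta)$. Reading off the B\'esineau recurrence \eqref{eqnBesineauRec}, for each digit $\varepsilon\in\{0,\ldots,q-1\}$ I would define a complex $2\times 2$ matrix $A(\varepsilon)$ so that
\[
\begin{pmatrix}\gamma_{qt+\varepsilon}\\\gamma_{qt+\varepsilon+1}\end{pmatrix}=A(\varepsilon)\begin{pmatrix}\gamma_t\\\gamma_{t+1}\end{pmatrix}\qquad(t\geq 0).
\]
For $\varepsilon\leq q-2$ both rows of $A(\varepsilon)$ are read directly from \eqref{eqnBesineauRec} (at $k=\varepsilon$ and $k=\varepsilon+1$); for $\varepsilon=q-1$ the second row is $(0,1)$ since $\gamma_{q(t+1)}=\gamma_{t+1}$. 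Iterating along the proper $q$-ary expansion of $t$ produces the matrix-product representation required in condition~(1) of Theorem~\ref{thmmain}, with $\alpha=\gamma_0=1$ and $\beta=\gamma_1(\vartheta)$. The value of $\gamma_1$ is determined by specializing \eqref{eqnBesineauRec} to $t=0$, $k=1$, yielding $\gamma_1=(q-1)\e(\vartheta)/\bigl(q-\e(-\vartheta(q-1))\bigr)$; the denominator is nonzero since it has modulus at least $q-1\geq 1$.

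For condition~(2) I would try the ansatz $a=1$, $d=1+bc$, so that $ad-bc=1$ automatically. The digit $\varepsilon=0$ is handled for free: $a_2(0)=0$, and the remaining factor $a_1(0)\beta-a_3(0)\alpha-a_4(0)\beta$ vanishes by the very formula for $\gamma_1$ just derived. For $\varepsilon\in\{1,\ldots,q-1\}$ the coefficient $a_2(\varepsilon)$ is nonzero and \eqref{eqnessential} rearranges to
\[
b\,R(\varepsilon)+\beta(1+2bc)-c(1+bc)=0,\qquad R(\varepsilon):=\frac{a_1(\varepsilon)\beta-a_3(\varepsilon)\alpha-a_4(\varepsilon)\beta}{a_2(\varepsilon)}.
\]
Provided $R(\varepsilon)$ does not depend on $\varepsilon$, what remains is a single quadratic in $c$ with $b$ free, any complex root of which supplies valid parameters. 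Theorem~\ref{thmmain} then yields $\gamma_t(\vartheta)=\gamma_{t^R}(\vartheta)$ (correcting the typographical $\alpha$ in the statement).

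The main obstacle is precisely the constancy of $R(\varepsilon)$. I would use the observation that $a_3(\varepsilon)\alpha+a_4(\varepsilon)\beta$ equals $\gamma_{\varepsilon+1}$ for $\varepsilon\leq q-2$ (by \eqref{eqnBesineauRec} at $t=0$) and equals $\gamma_1$ for $\varepsilon=q-1$ (because $\gamma_q=\gamma_1$). Substituting the explicit entries from \eqref{eqnBesineauRec}, the expression $R(\varepsilon)$ for $\varepsilon\leq q-2$ splits as $P/\varepsilon+Q$, where $P$ and $Q$ depend only on $\e(\vartheta q),\e(\vartheta),\e(\vartheta(q+1))$ and $\gamma_1$. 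The decisive identity $P=0$ becomes a short manipulation of exponentials after inserting the closed form of $\gamma_1$, and the edge case $\varepsilon=q-1$ reduces to the same identity $\bigl[q\e(\vartheta q)-\e(\vartheta)\bigr]\gamma_1=(q-1)\e(\vartheta(q+1))$. Hence $R(\varepsilon)=Q=\e(\vartheta(q+1))-\bigl(\e(\vartheta q)+\e(\vartheta)\bigr)\gamma_1$ for every $\varepsilon\in\{1,\ldots,q-1\}$, as required.
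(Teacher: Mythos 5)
Your proposal is correct and follows essentially the same route as the paper: the same matrices $A(\varepsilon)$ read off from B\'esineau's recurrence~\eqref{eqnBesineauRec}, the same choice $\alpha=\gamma_0(\vartheta)=1$ and $\beta=\gamma_1(\vartheta)$, and the same key computation in which the defining relation for $\gamma_1$ makes~\eqref{eqnessential} hold. The only cosmetic difference is that the paper exhibits the parameters explicitly as $a=d=1$, $b=\overline{\beta}$, $c=0$ (for which your constant $R(\varepsilon)$ equals $-\beta/\overline{\beta}$), whereas you establish their existence via the constancy of $R(\varepsilon)$.
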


\begin{proof}
Set
\[A(\varepsilon)=\left(\begin{matrix}
\frac{q-\varepsilon}{q}\e(\vartheta \varepsilon)&\frac{\varepsilon}q \e(-\vartheta(q-\varepsilon))\\
\frac{q-\varepsilon-1}q\e(\vartheta(\varepsilon+1))&\frac{\varepsilon+1}{q}\e(-\vartheta(q-\varepsilon-1))
\end{matrix}\right)\]
and $\alpha=\gamma_0(\vartheta)=1$, $\beta=\gamma_1(\vartheta)=(q-1)/(q\e(-\vartheta)-\e(-\vartheta q))$.
Then by~\eqref{eqnBesineauRec} the first condition in Theorem~\ref{thmmain} is satisfied.
As in the paper~\cite{MS2012} by Morgenbesser and the author, set
\[\left(\begin{matrix}a&b\\c&d\end{matrix}\right)=
\left(\begin{matrix}1&\overline \beta\\0&1\end{matrix}\right).\]
Inserting these values into~\eqref{eqnessential} and multiplying the equation by $\lvert u\rvert^2 q(q-1)$, the proof is complete after a straightforward calculation.
\end{proof}

\begin{corollary}\label{corstern}
Assume that $x$ and $y$ are complex numbers and that the sequence $(z_n)_{n\geq 0}$ satisfies $z_{2n}=z_n$ and $z_{2n+1}=xz_n+yz_{n+1}$ for all $n\geq 1$.
Then $z_n=z_{n^R}$, where the digit reversal is in base $2$.
In particular, Stern's diatomic sequence and the Stern polynomials defined by equation~\eqref{eqnSternpolyRec} satisfy a digit reversal symmetry.
\end{corollary}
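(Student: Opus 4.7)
The plan is to specialize Theorem~\ref{thmmain} to $q = 2$. The first step is to read the recurrence off as a matrix action. Both $(z_{2n}, z_{2n+1})$ and $(z_{2n+1}, z_{2n+2})$ are linear in $(z_n, z_{n+1})$, giving
\[A(0) = \begin{pmatrix} 1 & 0 \\ x & y \end{pmatrix}, \qquad A(1) = \begin{pmatrix} x & y \\ 0 & 1 \end{pmatrix}.\]
The product form in condition~(1) of Theorem~\ref{thmmain} then follows by induction on $n$, once the base case at $n = 1$ is set up correctly. This is the only subtlety: the identity at $n = 1$ forces $z_1 = x\alpha + y\beta$, which I satisfy by taking $\beta = z_1$ and $\alpha = (1 - y)z_1/x$ under the generic assumption $x \neq 0$. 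The original value of $z_0$ is immaterial, since it plays no role in the recurrence for $n \geq 1$ nor in the claim $z_n = z_{n^R}$ for $n \geq 1$.

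The second step is to find $a, b, c, d$ with $ad - bc = 1$ verifying~\eqref{eqnessential}. For $\varepsilon = 0$ one has $a_2(0) = 0$ and the equation collapses to $\beta(1 - y) = x\alpha$, which holds by construction. For $\varepsilon = 1$, since $a_3(1) = 0$, the equation reduces to
\[ab\beta(x - 1) + y(\beta + 2bc\beta - cd\alpha) = 0.\]
The minimal ansatz $c = 0,\ d = 1,\ a = 1$ makes $ad - bc = 1$ automatic and collapses this to a linear equation in $b$, with solution $b = y/(1 - x)$. Provided $x \neq 1$ and $y \neq 0$ (so that $b \neq 0$), Theorem~\ref{thmmain} then delivers $z_n = z_{n^R}$.

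The main obstacle is bridging the generic case to the full statement, since the choice $b = y/(1 - x)$ breaks down exactly at the Stern specialization $x = y = 1$. I would close this gap by a polynomial-identity argument: iterating the recurrence shows that for each $n \geq 1$, $z_n = p_n(x, y) z_1$ for some polynomial $p_n \in \mathbb{C}[x, y]$; hence $p_n - p_{n^R}$ vanishes on the Zariski-dense open set $\{(x, y) : x \neq 0, 1 \text{ and } y \neq 0\}$ and therefore vanishes identically. Specializing recovers Stern's diatomic sequence ($x = y = 1,\ z_0 = 0,\ z_1 = 1$) and the Stern polynomials (with $x, y$ as formal indeterminates).
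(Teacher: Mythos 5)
Your proof is correct and follows essentially the same route as the paper: the same matrices $A(0),A(1)$, the same $\alpha=(1-y)z_1/x$, $\beta=z_1$ (the paper normalizes $z_1=1$ first), and an appeal to Theorem~\ref{thmmain}, with the degenerate parameter values recovered afterwards. The only differences are in the choice of witness for~\eqref{eqnessential} --- the paper takes $a=\gamma$, $b=1$, $c=-\tfrac12$, $d=\tfrac1{2\gamma}$ with $4\gamma^2=\frac yx\frac{1-y}{1-x}$, whereas your unipotent choice $a=d=1$, $c=0$, $b=y/(1-x)$ works just as well (and even avoids the extra exclusion $y=1$) --- and in closing the gap at $x,y\in\{0,1\}$, where the paper argues by continuity of $z_n$ in $(x,y)$ while your Zariski-density argument is equally valid and handles the formal Stern polynomials somewhat more directly.
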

\begin{proof}
Without loss of generality we may assume that $z_1=1$. The general case follows from rescaling the sequence $z$ if $z_1\neq 0$; if $z_1=0$, then $z_n=0$ for all $n\geq 1$ and the statement is trivial.
Moreover, we may assume that $x,y\not\in\{0,1\}$. For the other cases, note that $z_n$, for a given $n$, depends in a continuous way on both $x$ and $y$, so that the statement follows by approximation.

Set $A(0)=\left(\begin{smallmatrix}1&0\\x&y\end{smallmatrix}\right)$ and
$A(1)=\left(\begin{smallmatrix}x&y\\0&1\end{smallmatrix}\right)$
and $\alpha=(1-y)/x$, $\beta=1$.
Then $z$ satisfies the first condition in Theorem~\ref{thmmain}.
Let $\gamma$ be a root of $4\gamma^2=\frac yx\frac{1-y}{1-x}$ and set
\[\left(\begin{matrix}a&b\\c&d\end{matrix}\right)=\left(\begin{matrix}\gamma&1\\-\frac 12&\frac 1{2\gamma}\end{matrix}\right).\]
The verification of~\eqref{eqnessential} is straightforward.
\end{proof}

\section{Proof of Theorem~\ref{thmmain}}
We closely follow the proof of Theorem~1 in the paper~\cite{MS2012} by Morgenbesser and the author.
Consider the following property:
Assume that $\nu\geq 0$ and $\varepsilon_i\in\{0,\ldots,q-1\}$ for $0\leq i<\nu$.
Then
\[(a\,,0)S^{-1}A(\varepsilon_0)\cdots A(\varepsilon_{\nu-1})\left(\begin{matrix}\alpha\\\beta\end{matrix}\right)
=
(1\,,0)A(\varepsilon_{\nu-1})\cdots A(\varepsilon_0)S\left(\begin{matrix}d\alpha-b\beta\\0\end{matrix}\right)
\]
and
\[(0\,,b)S^{-1}A(\varepsilon_0)\cdots A(\varepsilon_{\nu-1})\left(\begin{matrix}\alpha\\\beta\end{matrix}\right)
=
(1\,,0)A(\varepsilon_{\nu-1})\cdots A(\varepsilon_0)S\left(\begin{matrix}0\\-c\alpha+a\beta\end{matrix}\right),
\]
where the empty product is to be interpreted as the identity matrix.
Summing these equations and noting that $(a\,,b)S^{-1}=(1\,,0)$ and $S(d\alpha-b\beta\,,-c\alpha+a\beta)^T=(\alpha\,,\beta)^T$, which follows from $ad-bc=1$, we obtain the statement of Theorem~\ref{thmmain}.

We will show the above identities by induction on $\nu$. For $\nu=0$ we have
$(a\,,0)S^{-1}(\alpha\,,\beta)^T
=
(1\,,0)S(d\alpha-b\beta\,,0)$
and
$(0\,,b)S^{-1}(\alpha,\beta)^T
=
(1\,,0)S(0\,,-c\alpha+a\beta)^T$,
which is trivial to check.
Assume now that $\nu\geq 1$. We set
\[
\left(\begin{matrix}\mathfrak{a} \\\mathfrak{b}\end{matrix}\right) =
S^{-1} A(\varepsilon_1)\cdots A(\varepsilon_{\nu-1})
\left(\begin{matrix}\alpha\\\beta\end{matrix}\right)
\ \text{and}\ %
(\mathfrak a'\,,\mathfrak b') = (1\,,0) A(\varepsilon_{\nu-1}) \cdots A(\varepsilon_1) S.
\]
By the induction hypothesis we obtain 
\begin{equation}\label{eqnfrakeq}
a\mathfrak a=(d\alpha-b\beta)\mathfrak a'\quad\textrm{and}\quad b\mathfrak b=(-c\alpha+a\beta)\mathfrak b'.
\end{equation}
We need to show that
\begin{equation}\label{eqnInduction1}
(a,\, 0) S^{-1} A(\varepsilon_0) S
\left(\begin{matrix}\mathfrak{a} \\\mathfrak{b}\end{matrix}\right)
= (\mathfrak a'\,,\mathfrak b') S^{-1} A(\varepsilon_0) S
\left(\begin{matrix}d\alpha-b\beta\\0\end{matrix}\right)
\end{equation}
and
\begin{equation}\label{eqnInduction2}
(0,\, b) S^{-1} A(\varepsilon_0) S
\left(\begin{matrix}\mathfrak{a} \\\mathfrak{b}\end{matrix}\right)
= (\mathfrak a'\,,\mathfrak b') S^{-1} A(\varepsilon_0) S
\left(\begin{matrix}0\\-c\alpha+a\beta\end{matrix}\right).
\end{equation}

For brevity, we set
\[\left(\begin{matrix}s_1(\varepsilon)&s_2(\varepsilon)\\s_3(\varepsilon)&s_4(\varepsilon)\end{matrix}\right)=S^{-1}A(\varepsilon)S.\] 
It is easily seen, using~\eqref{eqnfrakeq}
and the restrictions $a\neq 0$, $b\neq 0$,
that each of equations~\eqref{eqnInduction1} and~\eqref{eqnInduction2} follows from
\begin{equation}\label{eqnSrelation}
a(-c\alpha+a\beta)s_2(\varepsilon_0)=b(d\alpha-b\beta)s_3(\varepsilon_0).
\end{equation}
Noting that $s_2(\varepsilon)=bda_1(\varepsilon)+d^2a_2(\varepsilon)-b^2a_3(\varepsilon)-bda_4(\varepsilon)$ and $s_3(\varepsilon)=-aca_1(\varepsilon)-c^2a_2(\varepsilon)+a^2a_3(\varepsilon)+aca_4(\varepsilon)$ and using $ad-bc=1$,~\eqref{eqnSrelation} reduces to~\eqref{eqnessential} after some short calculation.
This finishes the proof of Theorem~\ref{thmmain}.


\begin{thebibliography}{10}

\bibitem{AS1992}
J.-P. Allouche and J. Shallit,
\newblock The ring of {$k$}-regular sequences,
\newblock {\it Theoret. Comput. Sci.}, {\bf 98} (1992), 163--197.

\bibitem{B1972}
J. B\'esineau,
\newblock Ind\'ependance statistique d'ensembles li\'es \`a la fonction ``somme des chiffres'',
\newblock {\it Acta Arith.}, {\bf 20} (1972), 401--416.

\bibitem{C2017}
M. Coons,
\newblock Proof of {N}orthshield's conjecture concerning an analogue of
  {S}tern's sequence for $\mathbb {Z}[\sqrt{2}]$,
\newblock preprint, 2017,
\newblock http://arxiv.org/abs/1709.01987.

\bibitem{CS2017}
M. Coons and L. Spiegelhofer,
\newblock The maximal order of hyper-({$b$}-ary)-expansions.
\newblock {\it Electron. J. Combin.}, {\bf 24} (2017), Paper 1.15.

\bibitem{CT2014}
M. Coons and J. Tyler,
\newblock The maximal order of {S}tern's diatomic sequence.
\newblock {\it Mosc. J. Comb. Number Theory}, {\bf 4} (2014), 3--14.

\bibitem{D1982}
E.~W. Dijkstra.
\newblock {\it Selected writings on computing: a personal perspective}.
\newblock Texts and Monographs in Computer Science. Springer-Verlag, New York,
  1982.
\newblock Including a paper co-authored by C. S. Scholten.

\bibitem{D1980}
E.~W. Dijkstra.
\newblock Problem 563.
\newblock {\it Nieuw Archief voor Wiskunde}, {\bf XXVII} (1980), p.115.

\bibitem{DKS2016}
M. Drmota, M. Kauers, and L. Spiegelhofer.
\newblock On a conjecture of {C}usick concerning the sum of digits of
  {$n$} and {$n+t$}.
\newblock {\it SIAM J. Discrete Math.}, {\bf 30} (2016), 621--649.

\bibitem{GKP1989}
R.~L. Graham, D.~E. Knuth, and O. Patashnik.
\newblock {\it Concrete {M}athematics: a {F}oundation for {C}omputer
  {S}cience}.
\newblock Addison--Wesley, 1989.

\bibitem{L1929}
D.~H. Lehmer.
\newblock On {S}tern's diatomic series.
\newblock {\it Amer. Math. Monthly}, {\bf 36} (1929), 59--67.

\bibitem{L1969}
D.~A. Lind.
\newblock An extension of {S}tern's diatomic series.
\newblock {\it Duke Math. J.}, {\bf 36} (1969), 55--60.

\bibitem{MS2012}
J.~F. Morgenbesser and L. Spiegelhofer.
\newblock A reverse order property of correlation measures of the sum-of-digits
  function.
\newblock {\it Integers}, {\bf 12} (2012), Paper No. A47.

\bibitem{N2015}
S.~Northshield.
\newblock An analogue of {S}tern's sequence for {$\Bbb Z[\sqrt 2]$}.
\newblock {\it J. Integer Seq.}, {\bf 18} (2015), Article 15.11.6.

\bibitem{S2017}
L. Spiegelhofer.
\newblock A digit reversal property for {S}tern polynomials.
\newblock preprint, 2017,
\newblock http://arxiv.org/abs/1610.00108.

\bibitem{S1858}
M.~A. Stern.
\newblock Ueber eine zahlentheoretische {F}unktion.
\newblock {\it J. {R}eine {A}ngew. {M}ath.}, {\bf 55} (1858), 193--220.

\end{thebibliography}
\end{document}